\newtheorem{lemma}{Lemma}
\newtheorem{theorem}{Theorem}
\theoremstyle{definition}
\newtheorem{remark}{Remark}[section]
\begin{document}
\newcommand{\eps}{{\varepsilon}}
\newcommand{\proofend}{$\Box$\bigskip}
\newcommand{\C}{{\mathbb C}}
\newcommand{\Q}{{\mathbb Q}}
\newcommand{\R}{{\mathbb R}}
\newcommand{\Z}{{\mathbb Z}}
\newcommand{\RP}{{\mathbb {RP}}}
\newcommand{\CP}{{\mathbb {CP}}}
\newcommand{\Tr}{\rm Tr}
\newcommand{\g}{\gamma}
\newcommand{\G}{\Gamma}
\newcommand{\e}{\varepsilon}
\newcommand{\kk}{\kappa}

\title{Remarks on Joachimsthal integral and Poritsky property}

\author{Maxim Arnold\footnote{
Department of Mathematical Sciences, 
University of Texas at Dallas,  
Richardson, TX 75080; 
maxim.arnold@utdallas.edu
}
\and
Serge Tabachnikov\footnote{
Department of Mathematics,
Penn State University, 
University Park, PA 16802;
tabachni@math.psu.edu}
}

\date{}
\maketitle

\section{Joachimsthal integral} \label{sec:int}

The billiard inside an ellipse has a linear in momentum integral, the Joachimsthal integral. 

Let the ellipse be given by $Ax\cdot x =1$, where $A$ is a self-adjoint linear map, and let the phase space of the billiard map consist of pairs $(x,u)$ where $x$ is a point on the ellipse, and $u$ is an inward unit vector with foot point $x$ along the billiard trajectory. Let $y$ be the next intersection point of the trajectory with the ellipse and $v$ be the reflected unit vector at $y$. Then
$$
Ax\cdot u = - Ay\cdot u = Ay\cdot v,
$$
that is, $Ax\cdot u$ is an integral. See, e.g.,  \cite{Tab} for general information about mathematical billiards. 

The vector $Ay$ is normal to the conic at point $y$, and thus the second equality just corresponds to the billiard reflection law in an arbitrary curve and any normal vector. It is the equality $Ax\cdot u = - Ay\cdot u$ that is specific to conics. 
Indeed, $u$ is collinear with $y-x$ and, replacing $u$ with $y-x$, we have
\begin{equation}
	\label{eq:star}
	(Ax+Ay)\cdot (y-x) = Ax\cdot y -Ax\cdot x + Ay\cdot y - Ay\cdot x =0
\end{equation}
since $Ax\cdot x = Ay\cdot y =1$ and $Ax\cdot y = x\cdot Ay$.

In the present note we show that the existence of an integral which is linear in momentum is characteristic to conics. Let $\g$ be a convex, not necessarily closed, curve. Assume that $\g$ admits a non-vanishing normal vector field $N$ such that for every line that intersects $\g$ at two points $x$ and $y$ one has $N(x)\cdot (y-x) = - N(y) \cdot (y-x)$. 

We show that conics are characterized by this property and extend this result to conics in the spherical and hyperbolic geometries. We also consider the multidimensional case and show that an analogous property characterizes ellipsoids.

\section{Planar billiards} \label{sec:plane}

Let $\g(x)$ be a germ of a smooth  convex plane curve. 

\begin{theorem} \label{thm:plane}
Assume that $\g$ admits a non-vanishing normal vector field $N$ such that for every points $x,y\in\g$, one has
$$
N(x) \cdot (y-x) = -N(y) \cdot (y-x).
$$
Then $\g$ is a germ of a conic.
\end{theorem}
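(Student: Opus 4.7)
\emph{Proof proposal.} My plan is to differentiate the defining identity, once in each variable, to extract a symmetric-linear structure on the normal field $N$ along $\gamma$, then integrate and substitute back to exhibit the conic.

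Parametrize $\gamma$ by $t$ and set
\[
F(t_1,t_2) := (N(\gamma(t_1)) + N(\gamma(t_2))) \cdot (\gamma(t_2) - \gamma(t_1)),
\]
so the hypothesis reads $F \equiv 0$. My first step is to compute the mixed partial $\partial_{t_1}\partial_{t_2} F$, which must vanish identically; a direct calculation should produce the compatibility identity
\[
\dot N(t_1) \cdot \dot\gamma(t_2) = \dot N(t_2) \cdot \dot\gamma(t_1) \qquad \text{for all } t_1, t_2,
\]
where dots denote $d/dt$ and $\dot N(t) := (N\circ\gamma)'(t)$. Everything else in the proof will be driven by this single identity.

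Convexity of $\gamma$ guarantees that $\dot\gamma$ is not a constant direction, so I can choose parameters $s_1, s_2$ with $\dot\gamma(s_1)$ and $\dot\gamma(s_2)$ linearly independent, and define the unique linear map $A:\R^2\to\R^2$ by $A\dot\gamma(s_i) = \dot N(s_i)$ for $i=1,2$. The compatibility identity at $(s_1,s_2)$ makes $A$ symmetric, and applied to pairs $(t,s_i)$ it forces $\dot N(t) - A\dot\gamma(t)$ to be orthogonal to each $\dot\gamma(s_i)$, hence to vanish. Thus $\dot N(t) = A\dot\gamma(t)$ along $\gamma$, which integrates to $N(x) = Ax + b$ on $\gamma$ for some constant $b\in\R^2$.

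Finally, I would substitute $N(x) = Ax+b$ back into $F$; the cross term $A\gamma(t_1)\cdot\gamma(t_2) - A\gamma(t_2)\cdot\gamma(t_1)$ vanishes by symmetry of $A$, leaving
\[
F(t_1,t_2) = \Phi(\gamma(t_2)) - \Phi(\gamma(t_1)), \qquad \Phi(x) := Ax\cdot x + 2b\cdot x.
\]
So $F\equiv 0$ forces $\Phi$ to be constant on $\gamma$, exhibiting $\gamma$ as a germ of the conic $\{\Phi = c\}$. The one slightly delicate point is the global consistency in the previous paragraph---that the matrix $A$ built from tangent data at just two points really satisfies $\dot N = A\dot\gamma$ everywhere---but this is exactly what the compatibility identity provides. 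I expect no analytic obstacle at the germ level; the same strategy should adapt to the higher-dimensional extension promised later in the paper, though there the construction of $A$ will presumably require $n$ independent tangent directions and a correspondingly more careful application of the analogue of the compatibility identity.
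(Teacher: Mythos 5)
Your argument is correct, and it takes a genuinely different route from the paper. The paper works in the affine parameterization $[\g',\g'']=1$, writes the two-point identity as an odd function of $\eps$, and kills the cubic and quintic Taylor coefficients to conclude first that the tangent field is $\g'$ and then that the affine curvature is constant; this buys the reformulation via the area Poritsky property and the curvature ODE in the remark, which the paper uses later. You instead differentiate the two-point identity once in each variable to get the compatibility relation $\dot N(t_1)\cdot\dot\g(t_2)=\dot N(t_2)\cdot\dot\g(t_1)$, solve it by elementary linear algebra to get $N(x)=Ax+b$ with $A$ symmetric, and substitute back to produce the explicit quadratic equation $Ax\cdot x+2b\cdot x=c$ of the conic. (I checked the key computations: $\partial_{t_1}\partial_{t_2}F$ is exactly the stated relation; the antisymmetric form $Au\cdot v-Av\cdot u$ on $\R^2$ vanishes once it vanishes on one independent pair, so $A$ is symmetric; and the cross terms do cancel in the final substitution.) Your proof is more elementary, avoids affine differential geometry entirely, directly reconstructs the Joachimsthal structure $N=Ax$ of Section~\ref{sec:int}, and—as you note—generalizes verbatim to $\R^n$ with $n$ independent tangent directions, which would give Theorem~\ref{thm:high} directly without passing through planar sections and Gruber's lemma. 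One small point you should add: you must rule out the degenerate case where $\Phi$ fails to define a conic. If $A=0$ then $N\equiv b$ is a constant normal field, which together with two linearly independent tangent directions (strict convexity) contradicts $N\neq 0$; and a strictly convex arc cannot lie on a degenerate (reducible) conic. With that sentence the proof is complete.
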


\begin{proof}
Let $\g(t)$ be an affine parameterization such that $[\g',\g'']=1$, where the brackets denote the determinant. Then $\g'''=-k\g'$, and the function $k(t)$ is called the affine curvature. Conics, and only conics, have constant affine curvature. See, e.g., \cite{Gug} for the  basics of affine differential geometry. 

Turning the normal vector $N(t)$ by $90^{\circ}$ we obtain the tangent field in the form $f(t)\g'(t)$, where $f(t)$ is an unknown function. Hence we reformulate the condition of the theorem
as
\begin{equation} \label{eq:aff1}
[f(t-\eps)\g'(t-\eps)+f(t+\eps)\g'(t+\eps),\g(t+\eps)-\g(t-\eps)]=0
\end{equation}
for all sufficiently small $\eps$.  The left hand side of the formula (\ref{eq:aff1}) is odd in $\eps$, and the first non-trivial term is cubic. 
Equating this cubic term to zero, we find that $f'=0$ and so $f(t)$ has to be constant. We can assume that $f\equiv 1$. 

Equating the quintic term to zero, we see that $[\g',\g^{\rm (v)}]=0$. One has
$$
\g^{\rm (iv)} = - k'\g'-k\g'',\ \g^{\rm (v)} = (k^2-k'')\g'-2k'\g'',
$$
hence $[\g',\g^{\rm (v)}]=-2k'$. Therefore $k$ is constant, and $\g$ is a conic. 
\end{proof}

\begin{remark}
The relation between arc length and affine parameterization is as follows. If $s$ is the arc length parameter and $t$ is the affine one, then $ds/dt=\kappa^{-1/3}$, where $\kappa$ is the curvature of the curve.

The condition $k'=0$ on the affine curvature can be expressed as the third order differential equation on $\kappa(s)$:
$$
36 \kk^4\kk'+9\kk^2\kk'''-45\kk\kk'\kk''+40(\kk')^3=0,
$$
where prime now stands for the derivative with respect to the arc-length parameter. Thanks to the above theorem, this equation characterizes conics. 
\end{remark}

Let us return to equation (\ref{eq:aff1}). Let $\g(x)$ be the parameterization such that $\dot \g = f \g'$, where dot denotes $d/dx$ and prime denotes $d/dt$.
Then we have
\begin{equation} \label{eq:arint1}
[\dot\g(x-\eps)+\dot\g(x+\eps),\g(x+\eps)-\g(x-\eps)]=0
\end{equation}
for every sufficiently small $\eps$. 

Denote by $A(x,y)$ the area bounded by the curve $\g$ and its chord $(\g(x), \g(y))$.

\begin{lemma} \label{lm:area1}
Fix a constant $c$ and assume that $x$ and $y$ are constrained by $y-x=c$. Then $A(x,y)$ is constant.
\end{lemma}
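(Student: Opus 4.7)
The plan is to differentiate $A(x, x+c)$ with respect to $x$ and show, via (\ref{eq:arint1}), that the derivative vanishes identically.

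First, I express the area via Green's theorem. Parametrize the boundary of the cut-off region by the arc $\g(s)$, $s \in [x,y]$, followed by the chord from $\g(y)$ back to $\g(x)$. The arc contributes $\tfrac12\int_x^y [\g(s),\dot\g(s)]\,ds$, and a direct computation on the line segment (using the bilinearity and antisymmetry of $[\cdot,\cdot]$) shows that the chord contributes $\tfrac12[\g(y),\g(x)]$. Thus
$$
2A(x,y)=\int_x^y [\g(s),\dot\g(s)]\,ds + [\g(y),\g(x)].
$$

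Next, I impose $y=x+c$, so $dy/dx=1$, and differentiate. The boundary terms from the integral give $[\g(y),\dot\g(y)]-[\g(x),\dot\g(x)]$, while differentiating $[\g(y),\g(x)]$ gives $[\dot\g(y),\g(x)]+[\g(y),\dot\g(x)]$. Grouping these four terms using antisymmetry, I obtain the clean formula
$$
2\frac{dA}{dx} = [\g(y)-\g(x),\,\dot\g(x)+\dot\g(y)].
$$

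Finally, I set $t=(x+y)/2$ and $\e=c/2$, so $x=t-\e$ and $y=t+\e$. Then the right-hand side equals $-[\dot\g(t-\e)+\dot\g(t+\e),\,\g(t+\e)-\g(t-\e)]$, which vanishes by (\ref{eq:arint1}). Hence $A(x,x+c)$ is constant in $x$. There is no real obstacle beyond keeping track of signs and orientations when applying Green's theorem; the identification with (\ref{eq:arint1}) is then immediate.
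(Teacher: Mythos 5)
Your proof is correct and takes essentially the same route as the paper: the paper simply writes down the two partial derivatives $\partial A/\partial x=[\g(y)-\g(x),\dot\g(x)]$ and $\partial A/\partial y=[\g(y)-\g(x),\dot\g(y)]$ (up to the factor of $\tfrac12$ you keep track of) and applies the chain rule along $y=x+c$, which is exactly the total derivative you obtain via Green's theorem before matching it with (\ref{eq:arint1}).
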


\begin{proof}
One has
$$
\frac{\partial A}{\partial x} = \left[\g(y)-\g(x),\dot\g(x)\right],  \
\frac{\partial A}{\partial y} = \left[\g(y)-\g(x),\dot\g(y)\right],
$$
and formula (\ref{eq:arint1}) implies the result via the chain rule.
\end{proof}

Thus Theorem \ref{thm:plane} can be restated as follows: {\it if a convex curve $\g$ admits a parameterization $\g(x)$ such that for every sufficiently small constant $c$ the area cut off from $\g$ by the 1-parameter family of chords $(\g(x), \g(x+c))$ is constant, then $\g$ is a conic}.  

This constant area property relates  our original (inner) billiard problem with outer billiards.

Let $\Gamma$ be a smooth strictly convex closed curve oriented counterclockwise. The outer (a.k.a. dual) billiard about $\Gamma$ is defined as follows. Let $x$ be a point outside of $\Gamma$. Draw the oriented tangent line to $\Gamma$ from $x$ and reflect $x$  in the tangency point to obtain a new point $y$. The outer billiard map takes $x$ to $y$. See \cite{DT} or \cite{Tab} for a survey of outer billiards.

Let $\g$ be an invariant curve of the outer billiard map. One can reconstruct the outer billiard curve $\Gamma$ by the {\it area construction}: $\Gamma$ is the envelope of the chords of $\g$ that cut off a constant area from $\g$. That is, this envelope touches the chords at their midpoints. 

A convex curve that admits a parameterization $\g(x)$ such that  the area cut off from $\g$ by the 1-parameter family of chords $\g(x) \g(x+c)$ is constant for all sufficiently small values of $c$ is said to possess the {\it area Poritsky property}.  It is named after Hillel Poritsky \cite{Por}, who studied its dual version related to the {\it string construction} for inner billiards that reconstructs the billiard curve from its caustic).

The Poritsky property was recently thoroughly studied in \cite{Gl,GIT}. In particular, Lemma \ref{lm:area1} shows that our Theorem \ref{thm:plane} is equivalent to the affine case of Theorem 1.13 of \cite{Gl}, and it provides a different proof of this result. 

To finish this section, let us return to the equality \eqref{eq:star}. Consider the gravitational law of attraction in the plane where the force is inverse proportional to the distance. The homeoid density on an ellipse $Ax\cdot x=1$ is the image of the uniform density on a circle under the affine map that takes the circle to the ellipse. That is, this density is  the area between the ellipse and the infinitesimally close homothetic ellipse, and it equals $1/|Ax|$. 

Newton's ``no gravity in a cavity" theorem states that an ellipse with the homeoid density exerts zero attraction at any interior point $O$. 

\begin{figure}[hbt]
	\centering
	\begin{tikzpicture}
		
			\tikzset{
			dot/.style={circle,inner sep=1pt,fill,name=#1,label={\small #1}}}
		
		\def\yi{-0.9};
		\def\yo{-3.5};
		
		\def\zi{-1.2};
		\def\zo{-3.3};
	 \draw [thick, name path=ell] (0,0) ellipse (4cm and 2cm); 
	 \draw [gray!100, line width = 3, domain=\yi:\zi] plot (\x, {sqrt(4-0.25*\x*\x)}); 
	 
	  \draw [gray!100, line width = 3, domain=\yo:\zo] plot (\x, -{sqrt(4-0.25*\x*\x)});

	 	\draw [name path=yiyo]($\yi*(1,0)+sqrt(16-\yi*\yi)*(0,0.5)$) -- ($\yo*(1,0)-sqrt(16-\yo*\yo)*(0,0.5)$); 
	 \draw[name path=zizo] ($\zi*(1,0)+sqrt(16-\zi*\zi)*(0,0.5)$) -- ($\zo*(1,0)-sqrt(16-\zo*\zo)*(0,0.5)$); 
	 \path [name intersections={of=yiyo and zizo, by={O}}];
	 \node[dot=$O$] (O) at (O){};
	  \draw[->,>=stealth'] ($\yi*(1,0)+sqrt(16-\yi*\yi)*(0,0.5)$)--($\yi*(1,0)+sqrt(16-\yi*\yi)*(0,0.5)-\yi*(0.125,0)-sqrt(16-\yi*\yi)*(0,0.25)$);
	 \draw[name path=circ, transparent]($\yi*(1,0)+sqrt(16-\yi*\yi)*(0,0.5)$) circle (1.15);
	 \path [name intersections={of=circ and yiyo, by={oo}}];
	 
	 \draw[->,>=stealth'] ($\yi*(1,0)+sqrt(16-\yi*\yi)*(0,0.5)$)--($\yi*(2,0)+sqrt(16-\yi*\yi)*(0,1)-(oo)$);
	 \node at ($\yi*(1,0)+sqrt(16-\yi*\yi)*(0,0.5)+(-0.15,0.25)$) {$\mathrm{d}x$}; 
	 \node at ($\yo*(1,0)-sqrt(16-\yo*\yo)*(0,0.5)-(0.1,0.3)$){$\mathrm{d} y$};
	 \node at ($\yi*(1,0)+sqrt(16-\yi*\yi)*(0,0.5)+(0.32,-0.8)$) {$\nu$}; 
	  \node at ($\yi*(1,0)+sqrt(16-\yi*\yi)*(0,0.5)+(0.9,0.8)$) {$u$}; 
 \end{tikzpicture}
\caption{Newton's ``no gravity in a cavity" theorem }
\end{figure}

Indeed, let $\ell$ be a line through $O$ intersecting the ellipse $\g$ at points $x$ and $y$. Turn $\ell$ through an infinitesimal angle $\eps$ about $O$, and let $dx$ and $dy$ be the  infinitesimal arcs of $\g$ cut off by the lines. Let $u$ be the unit vector from $O$ to $x$ and $\nu$ the unit normal vector to $\g$ at $x$. See Figure 1.

Then the arc length of $dx$ is $\eps |Ox|/ (u\cdot \nu)$, its mass is $\eps |Ox|/ ((u\cdot \nu) |Ax|)$, 
and the force exerted at $O$ is
$$
\frac{\eps}{|Ax|(u\cdot \nu)} = \frac{\eps}{Ax\cdot u}.
$$
A similar formula holds for the attraction of $dy$, and the formula $(Ax+Ay)\cdot u = 0$ means that these two forces cancel each other.

Therefore Theorem \ref{thm:plane} can be interpreted as saying that the only curves that admit the density for which the attraction forces locally cancel each other in this way are conics. See \cite{PP} for a different take on the same statement.

\section{Surfaces of constant curvature} \label{sec:sphere}
\subsubsection*{Spherical case}
Let $\mathbb{S}^2$ be the unit sphere.
A spherical conic is the intersection of $\mathbb{S}^2$ with a quadratic cone $Ax\cdot x = 0$ in $\R^3$. See \cite{Iz} for the geometry of spherical and hyperbolic conics.

First we verify that an analog of the Joachimsthal integral holds in spherical geometry.
Let $\g$ be a spherical conic, $x\in \g$ its point, $u$ an inward unit tangent vector at $x$. Let $y$ be the intersection point of the geodesic through $x$ in the direction of $u$ with $\g$, and let $v$ be the unit tangent vector to this geodesic at point $y$.

\begin{lemma} \label{lm:sphinv}
One has $Ax\cdot u = - Ay\cdot v$. 
\end{lemma}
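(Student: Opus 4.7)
The plan is to imitate the Euclidean derivation of equation (\ref{eq:star}), exploiting the fact that the geodesic through $x$ and $y$ is the great circle obtained by intersecting $\mathbb{S}^2$ with the $2$-plane $\Pi$ spanned by $x$ and $y$. Both unit tangents $u$ at $x$ and $v$ at $y$ lie in $\Pi$, and are characterized by $u\perp x$ and $v\perp y$, together with the orientation convention that $u$ points from $x$ toward $y$ and $v$ is the forward tangent at $y$ of the geodesic continuing past $y$.

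Working inside $\Pi$, the unit vector orthogonal to $x$ pointing toward $y$ is
$$
u=\frac{y-(x\cdot y)\,x}{\sqrt{1-(x\cdot y)^2}},
$$
and parameterizing the great circle by $c(t)=\cos(t)\,x+\sin(t)\,u$ and differentiating at the geodesic distance $t_0$ (for which $\cos t_0=x\cdot y$) gives
$$
v=\frac{(x\cdot y)\,y-x}{\sqrt{1-(x\cdot y)^2}}.
$$
With these two formulas, the cone equations $Ax\cdot x=0$ and $Ay\cdot y=0$ collapse each inner product to a single scalar:
$$
Ax\cdot u=\frac{Ax\cdot y-(x\cdot y)\,Ax\cdot x}{\sqrt{1-(x\cdot y)^2}}=\frac{Ax\cdot y}{\sqrt{1-(x\cdot y)^2}},
$$
and similarly
$$
Ay\cdot v=\frac{(x\cdot y)\,Ay\cdot y-Ay\cdot x}{\sqrt{1-(x\cdot y)^2}}=-\frac{Ay\cdot x}{\sqrt{1-(x\cdot y)^2}}.
$$
Self-adjointness of $A$, giving $Ax\cdot y=Ay\cdot x$, then yields $Ax\cdot u=-Ay\cdot v$, which is the claim.

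The only place any care is required, and hence the main (if mild) obstacle, is the orientation of $v$: one must take the unit tangent at $y$ pointing in the direction of continued motion along the geodesic from $x$, rather than its opposite. This is the analog of the sign in the Euclidean identity $Ax\cdot u=-Ay\cdot u$ with the incoming $u$ at $y$, and it is pinned down here by the explicit formula $c'(t_0)=-\sin(t_0)\,x+\cos(t_0)\,u$. Once this orientation is fixed, the proof reduces to the two-line calculation above, using only the two quadratic cone constraints and the self-adjointness of $A$.
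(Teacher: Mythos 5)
Your proof is correct and follows essentially the same route as the paper: both identify $u$ and $v$ with the vectors $y-(x\cdot y)x$ and $(x\cdot y)y-x$ (the paper works with these unnormalized but equal-length representatives, you carry the normalization $\sqrt{1-(x\cdot y)^2}$ explicitly), and both then reduce the claim to $Ax\cdot y = Ay\cdot x$ using the cone equations $Ax\cdot x = Ay\cdot y = 0$. Your explicit parameterization $c(t)=\cos(t)\,x+\sin(t)\,u$ is a cleaner way of pinning down the orientation of $v$ than the paper's verbal argument, but the substance is identical.
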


\begin{proof}
Assuming that $x,y \in \mathbb{S}^2$ are distinct and non-antipodal points, we claim that 
\begin{equation} \label{eq:tang}
y-(x\cdot y)\ x\ \ {\rm and}\ \ (x\cdot y)\ y - x
\end{equation}
are oriented tangent vectors of the same length at points $x$ and $y$, respectively, to the oriented geodesic connecting $x$ and $y$.

Indeed, the first vector is orthogonal to $x$, and the second one to $y$, that is, they are tangent to the sphere at $x$ and $y$, respectively.  Both have length $\sqrt{1-(x\cdot y)^2}$, and both lie in the plane spanned by $x$ and $y$, hence they are tangent to the geodesic from $x$ to $y$. It remains to notice that they define the same orientation of the geodesic connecting $x$ and $y$.

Therefore we may replace $u$ and $v$ by $y-(x\cdot y)\ x$ and $(x\cdot y)\ y - x$, respectively. Then
$$
Ax \cdot (y-(x\cdot y)\ x) + Ay \cdot ((x\cdot y)\ y - x) = Ax\cdot y - Ay\cdot x =0
$$
since $Ax\cdot x = Ay\cdot y =0$. 
This completes the proof.
\end{proof}

The next theorem is a spherical analog of Theorem \ref{thm:plane}. 

\begin{theorem} \label{thm:sphere}
Let $\g$ be a smooth strictly convex spherical curve. Assume that $\g$ admits a non-vanishing normal  vector field $N$ (tangent to the sphere) with the following property:
for any points $x,y\in\g$, one has
$$
N(x)\cdot u =- N(y)\cdot v,
$$
where $u$ and $v$ are the unit tangent vectors at points $x$ and $y$ to the geodesic connecting $x$ and $y$. Then $\g$ is a (part of a) spherical conic.  
\end{theorem}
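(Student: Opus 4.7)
The plan is to reduce Theorem~\ref{thm:sphere} to the planar Theorem~\ref{thm:plane} via gnomonic (central) projection, which sends spherical conics to plane conics and spherical geodesics to straight lines. To prepare for this, I would first simplify the hypothesis: using the explicit tangent-vector formulas of Lemma~\ref{lm:sphinv} together with the orthogonality $N(x)\cdot x = N(y)\cdot y = 0$ (since $N$ is tangent to the sphere), the relation $N(x)\cdot u = -N(y)\cdot v$ collapses to the compact symmetric condition
\begin{equation}\label{eq:sphcond}
N(x)\cdot y = N(y)\cdot x \quad \text{for all $x,y\in\g$.}
\end{equation}

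The main step is to push everything through $\pi(x) = x/x_3$. Let $\bar x = \pi(x) \in \R^2$, which we also view as the $3$-vector $(\bar x_1, \bar x_2, 1)$ when convenient, and define the weighted lift $\tilde N(\bar x) := |\bar x|\, N(x)\in \R^3$. Because $N(x)\cdot x = 0$, its components satisfy $\tilde N_3(\bar x) = -n'(\bar x)\cdot(\bar x_1,\bar x_2)$, where $n'(\bar x) := (\tilde N_1(\bar x), \tilde N_2(\bar x))\in \R^2$. A short computation using $N(x)\cdot \dot\g(x) = 0$ then shows that $n'$ is a non-vanishing normal field to the image curve $\tilde\g := \pi(\g)$. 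Multiplying \eqref{eq:sphcond} by $|\bar x|\,|\bar y|$, expanding in coordinates, and substituting the expression for $\tilde N_3$ at both points, the third-coordinate contributions combine to produce precisely the planar Joachimsthal equation
\[
\bigl(n'(\bar x)+n'(\bar y)\bigr)\cdot(\bar y - \bar x) = 0
\]
of Theorem~\ref{thm:plane}. That theorem now identifies $\tilde\g$ with a plane conic $\{Q(\bar x_1,\bar x_2) = 0\}$; since the homogenization $\{Q(x_1,x_2,x_3) = 0\}$ is a quadratic cone in $\R^3$ whose intersection with $\mathbb{S}^2$ is exactly $\g$, we conclude that $\g$ is a spherical conic.

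The main obstacle is essentially algebraic bookkeeping: selecting the correct scaling $|\bar x|$ in the lift, checking that $n'$ is truly normal to $\tilde\g$, and confirming that \eqref{eq:sphcond} transforms into \emph{exactly} the planar Joachimsthal condition rather than merely a nowhere-vanishing multiple of it. An alternative, more computational, route (closer in spirit to the proof of Theorem~\ref{thm:plane}) would be to write $N = f(t)\,\g(t)\times\dot\g(t)$, recast \eqref{eq:sphcond} as the triple-determinant condition
\[
\det\bigl(\g(t-\eps),\, f(t-\eps)\dot\g(t-\eps)+f(t+\eps)\dot\g(t+\eps),\, \g(t+\eps)\bigr) = 0,
\]
pick a sphero-affine parameterization (for instance, normalizing $[\g,\dot\g,\ddot\g]\equiv 1$), and extract $f'\equiv 0$ from the cubic order in $\eps$ and constancy of a sphero-affine curvature invariant from the quintic, thereby characterizing spherical conics. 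I would favour the gnomonic route since it re-uses Theorem~\ref{thm:plane} without having to introduce a new invariant calculus.
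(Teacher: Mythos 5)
Your reduction is correct, and it is a genuinely different route from the paper's. You first collapse the hypothesis to the symmetric relation $N(x)\cdot y = N(y)\cdot x$ (using the tangent-vector formulas from Lemma \ref{lm:sphinv} and $N(x)\cdot x=0$; the common length factor $\sqrt{1-(x\cdot y)^2}$ cancels), and then transport it by gnomonic projection $x\mapsto x/x_3$. The bookkeeping you were worried about does close up: with $\bar x=(\bar x_1,\bar x_2,1)=x/x_3$ and $\tilde N(\bar x)=|\bar x|\,N(x)$, the relation becomes $\tilde N(\bar x)\cdot\bar y=\tilde N(\bar y)\cdot\bar x$; since $\tilde N(\bar x)\cdot\bar x=0$ forces $\tilde N_3=-n'\cdot(\bar x_1,\bar x_2)$, substituting gives exactly $\bigl(n'(\bar x)+n'(\bar y)\bigr)\cdot(\bar y-\bar x)=0$ with no extraneous factor, and $n'$ is a nonvanishing normal field to $\pi(\g)$ because $\tilde N\cdot\dot{\bar\g}=0$ and $\dot{\bar\g}$ has vanishing third component. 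The paper instead works intrinsically: it takes an equiaffine parameterization with $[\g,\g',\g'']=1$, writes $\g'''=a\g+b\g'$, expands the determinant condition in $\eps$, gets $f'=0$ from the cubic term and $2a=b'$ from the quintic, and invokes the projective-differential-geometry characterization of conics. Your approach buys economy (no new invariant calculus, direct reuse of Theorem \ref{thm:plane}) and adapts immediately to the hyperbolic case via the Klein model; the paper's approach buys the spherical area Poritsky interpretation as a byproduct, which your projection does not give since gnomonic projection is not area-preserving. Two small points to nail down in a written version: gnomonic projection covers only an open hemisphere, so for a curve not contained in one you must patch (local conic arcs on overlapping pieces glue into one quadratic cone by uniqueness of the conic through five points in general position); and you should note that strict convexity of $\g$ gives nonvanishing curvature of $\pi(\g)$ (geodesics go to lines), so the affine parameterization in Theorem \ref{thm:plane} is available.
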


\begin{proof}
We argue similarly to the proof of Theorem \ref{thm:plane}.

Let us give the curve $\g$ an equiaffine parameterization such that $[\g,\g',\g'']=1$, where the brackets denote the $3\times 3$ determinant. Then $[\g,\g',\g''']=0$, hence 
\begin{equation} \label{eq:3rd}
\gamma'''=a \gamma+b \gamma',
\end{equation}
 where $a(t)$ and $b(t)$ are unknown functions.

As before, we   turn the normal vectors $90^\circ$ to make them tangent to $\g$ and, accordingly, replace dot product by cross-product, that is, the determinant of the position vector and the two tangent vectors involved. 

Write the tangent field as $f\g'$, where $f(t)$ is an unknown function. Let
$x=\g(t-\eps), y=\g(t+\eps)$ and, according to formula (\ref{eq:tang}), 
$$
u= \g(t+\eps) - (\g(t-\eps) \cdot \g(t+\eps)) \g(t-\eps),\ v = (\g(t-\eps) \cdot \g(t+\eps)) \g(t+\eps) - \g(t-\eps).
$$
We obtain a spherical analog of the equation (\ref{eq:aff1}):
\begin{equation} \label{eq:aff2}
f(t-\eps) [\g(t-\eps),\g'(t-\eps),\g(t+\eps)]- f(t+\eps) [\g(t+\eps),\g'(t+\eps),\g(t-\eps)]=0.
\end{equation}
As before, the left hand side is odd in $\eps$, and the first non-trivial term is cubic.

Evaluating this cubic term and using $[\g,\g',\g''']=0$, we find that $f$ is constant. Set $f\equiv 1$. 
Next, we evaluate the quintic term. Here is the calculation in which we shorthand $\gamma(t\pm \eps)$ as $(\gamma_{\pm})$.

One has
\[[\gamma_-,\gamma'_-,\gamma_+]-[\gamma_+,\gamma'_+,\gamma_-]=
(\gamma'_-\times \gamma_+-\gamma_+\times\gamma'_+)\cdot \gamma_-=\]
\[=(\gamma'_-+\gamma'_-)\times \gamma_+)\cdot \gamma_-=(\gamma_+\times \gamma_-)\cdot (\gamma'_-+\gamma'_+).\]
Expanding up to $\eps^5$, we get
\[\frac{1}{2} (\gamma_+\times \gamma_-)=\eps (\gamma'\times \gamma)+\eps^3(\frac{\gamma'''\times \gamma}{6}-\frac{\gamma''\times \gamma'}{2})+\eps^5(\frac{\gamma'''\times\gamma''}{12}-\frac{\gamma^{IV}\times \gamma'}{24}+\frac{\gamma^{V}\times \gamma}{120})\]
and 
\[\frac{1}{2} (\gamma'_-+\gamma'_+)=\gamma'+\eps^2\frac{\gamma'''}{2!}+\eps^4\frac{\gamma^V}{4!}.\]
 Therefore the quintic term is
 \[\frac{(\gamma^V\times \gamma)\cdot \gamma'}{30}+\frac{(\gamma'\times \gamma)\cdot \gamma^V}{6}-(\gamma''\times \gamma')\cdot \gamma'''+\frac{(\gamma'''\times \gamma'')\cdot \gamma'}{3}.\]
 
Differentiate  equation (\ref{eq:3rd}) twice to obtain
 \[\gamma^V=(a''+ab)\gamma+(2a'+b^2+b'')\gamma'+(a+2b')\gamma''\]
 and substitute in the formula above. This yields, up to a factor, the quintic term: $[\g,\g',\g''] (2a-b')$. It follows that $2a=b'$.

Finally, we use the characterization of projective conics in terms of the cubic differential equations (\ref{eq:3rd}). Namely, conics, and only conics, satisfy the relation $2a=b'$, see, e.g., section 1.4 of \cite{OT}. This concludes the proof.
\end{proof}

Next, we turn to the area Poritsky property. Consider equation (\ref{eq:aff2}) and let $\g(x)$ be a parameterization such that $\dot \g = f \g'$, where dot denotes $d/dx$ and prime denotes $d/dt$. Then
$$
[\g(x-\eps),\g(x+\eps),\dot\g(x-\eps)+\dot\g(x+\eps)]=0
$$
for all sufficiently small $\eps$. 

The above equation means that the velocity of the midpoint of the arc $\g(x-\eps) \g(x+\eps)$ is tangent to this arc as $x$ varies. As in the plane, this implies that the area bounded by the curve and this chord is constant. Thus $\g$ possesses the area Poritsky property, and our proof of Theorem \ref{thm:sphere} provides a different proof of the spherical  case of Theorem 1.13 of \cite{Gl}.

\subsubsection* {Hyperbolic case} \label{sec:hyp}

A version of  Theorem \ref{thm:sphere} holds in the hyperbolic plane as well. For this, consider the pseudosphere model of $\mathbb{H}^2$, that is, the upper sheet of the hyperboloid $x^2+y^2 -z^2 =-1$ in the Minkowski space with the metric $dx^2+dy^2-dz^2$.

Then the arguments of Section \ref{sec:sphere} apply with the appropriate changes of the signs in the formulas.
The area Poritsky property interpretation is valid as well, providing an alternative approach to  the hyperbolic  case of the Theorem 1.13 of \cite{Gl}.
We do not dwell on the details here.

We finish this section by two remarks. 

First, one has the spherical duality that interchanges points and great circles. Outer and inner billiards are dual to each other, therefore the area Poritsky property is dual to the usual Poritsky property related to the string construction; see \cite{Gl} for a detailed discussion of these matters.

Second, the gravitational interpretation extends to the spherical and hyperbolic geometries as well, see \cite{IT} for details.

\section{Higher dimensions} \label{sec:high}

 Let $S$ be a smooth closed strictly convex hypersurface in Euclidean space, the boundary of a billiard table. We have the following multi-dimensional analog of Theorem \ref{thm:plane}.
 
 \begin{theorem} \label{thm:high}
 Assume that $S$ admits a non-vanishing normal vector field $N$ such that for every points $x,y\in S$ one has
$$
N(x) \cdot (y-x) = -N(y) \cdot (y-x).
$$
Then $S$ is an ellipsoid.
 \end{theorem}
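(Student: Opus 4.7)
The plan is to reduce the higher-dimensional statement to Theorem~\ref{thm:plane} by slicing $S$ with affine 2-planes, and then to invoke the classical characterization of ellipsoids by their plane sections.

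Let $\Pi \subset \R^n$ be an affine 2-plane intersecting $S$ transversally along a closed strictly convex planar curve $C = \Pi \cap S$, and denote by $N_\Pi(x)$ the orthogonal projection of $N(x)$ onto $\Pi$ for $x \in C$. The intersection is automatically transverse at each point of $C$: a non-transverse intersection would force $\Pi \subset T_xS$, which by strict convexity would give $\Pi \cap S = \{x\}$, contradicting that $C$ is a curve. Transversality implies that $N(x)$ is not orthogonal to $\Pi$, so $N_\Pi$ is non-vanishing on $C$. Moreover, $N(x) \perp T_xS \supset T_xC$ and $T_xC \subset \Pi$, so $N_\Pi(x)$ is a genuine normal vector to $C$ inside $\Pi$. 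Finally, for $x, y \in C$ the chord $y-x$ lies in $\Pi$, and hence
$$
N_\Pi(x) \cdot (y-x) = N(x)\cdot (y-x) = - N(y) \cdot (y-x) = - N_\Pi(y) \cdot (y-x),
$$
so $N_\Pi$ inherits the hypothesis of Theorem~\ref{thm:plane} along $C$. Applying that theorem inside $\Pi$ shows that $C$ is a conic, and since it is closed and strictly convex, $C$ is an ellipse.

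Since the argument applies to every affine 2-plane cutting $S$ in a closed curve, every 2-plane section of $S$ is an ellipse. The conclusion that $S$ itself is an ellipsoid is now a classical fact in convex/affine geometry: a $C^2$ closed strictly convex hypersurface in $\R^n$, $n \geq 3$, all of whose planar cross-sections are conics is a quadric, and a closed convex quadric is an ellipsoid. Alternatively one may argue by induction on $n$: each hyperplane section of $S$ inherits the same Joachimsthal-type condition via projection of $N$ onto the hyperplane, so by the inductive hypothesis is an ellipsoid, and a convex hypersurface whose every hyperplane section is an ellipsoid is itself an ellipsoid.

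The reduction to Theorem~\ref{thm:plane} via projecting $N$ onto 2-planes is a routine and essentially formal step. The main obstacle is the concluding passage from ``every plane section is an ellipse'' to ``$S$ is an ellipsoid'': this is not elementary and requires either citing a classical characterization of ellipsoids (due to Blaschke and others) or supplying an inductive argument based on such a characterization in low dimensions.
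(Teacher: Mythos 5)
Your proposal is correct and follows essentially the same route as the paper: project $N$ orthogonally onto a transverse $2$-plane, check that the projected field is a non-vanishing normal field along the section inheriting the Joachimsthal condition (since the chord lies in the plane), apply Theorem~\ref{thm:plane} to conclude each section is an ellipse, and finish with the classical fact that a convex hypersurface all of whose planar sections are ellipses is an ellipsoid (the paper cites Lemma~12.1 of Gruber's book for this last step).
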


\begin{proof}
Let $\pi$ be a plane that transversally intersects $S$, and let $\g$ be the intersection curve. We claim that $\g$ is an ellipse.

To prove this, consider the orthogonal projection of the vectors $N$, taken at points of  $\g$, on the plane $\pi$. Denote this vector field along $\g$ by $\nu$. Since $\pi$ is transverse to $S$ and $N$ is orthogonal to it, the field $\nu$ is non-vanishing.

Let $x\in\g$ and let $\ell$ be the tangent line to $\g$ at $x$. Then $N(x) \perp \ell$, and $N(x)-\nu(x) \perp \pi$, hence $N(x)-\nu(x) \perp\ell$. Therefore $\nu = N - (N-\nu)$ is also an orthogonal vector field along $\g$. See Figure 2.  

\begin{figure}[hbt]
	\centering
	\begin{tikzpicture}
		
		\tikzset{
			dot/.style={circle,inner sep=1pt,fill,name=#1,label={\small #1}}}
		\fill[gray!100, opacity=0.2] (2,0)--(5,2)--(12,2)--(9,0)--cycle;
\draw [gray!70,thick, dashed, domain=5:9, samples=100, smooth] plot (\x, {1+0.25*sqrt(4-(\x-7)*(\x-7))}); 
\draw [gray!70,thick, domain=5:9, samples=100, smooth] plot (\x, {1-0.25*sqrt(4-(\x-7)*(\x-7))}); 

\draw [thick, domain=5:9, samples=100, smooth] plot (\x, {1-0.37*(\x-5)*(\x-9)}); 

 \draw[->,>=stealth'] (5.5,0.65)--(4.6,0.11);
 \draw[densely dotted] (4.6,0.11)--(4.6,1.5);
 \draw (5.5,0.65)--(4.1,1.05);
 \draw (5.5,0.65)--(6.9,0.25);
 \draw[->,>=stealth'] (5.5,0.65)--(4.6,1.5);
 \node  at (10.5,1.5) {$\pi$};
  \node  at (4.5,1.7) {$N$}; 
  \node  at (7.1,0.28) {$\ell$};
   \node  at (4.45,0.25) {$\nu$};
  \node  at (8.75,0.53) {$\g$};
 
	\end{tikzpicture}
	\caption{Projection of the normal field to the plane.}
\end{figure}

Let $x,y\in \g$. We claim that $(\nu(x)+\nu(y))\cdot(y-x)=0$. Indeed, $\nu(x)=N(x) + (\nu(x)-N(x))$, and $(\nu(x)-N(x)) \cdot (y-x)$. Likewise, for $\nu(y)$. Therefore
$$
(\nu(x)+\nu(y))\cdot(y-x)=(N(x)+N(y))\cdot(y-x)=0.
$$
Now Theorem \ref{thm:plane} implies that $\g$ is an ellipse, as claimed.

Finally, according to  \cite[Lemma 12.1]{Gru}, if all 2-dimensional sections of $S$ are ellipses, then $S$ is an ellipsoid. This concludes the proof.
\end{proof}

\begin{remark}
The ``no gravity in a cavity" interpretation discussed at the end of Section \ref{sec:plane} applies in the multi-dimensional case as well: the gravitational attraction in $n$-dimensional space is  proportional to $r^{1-n}$.
\end{remark}

Theorem \ref{thm:high} also has a local version in which $S$ is not assumed to be a closed hypersurface. This follows from the next result that is of independent interest.

\begin{theorem} \label{thm:quadrics}
Let $S$ be a smooth hypersurface in the Euclidean space. Assume that every transverse 2-dimensional section of $S$ is a (part of a) conic. Then $S$ is a (part of a) quadric.
\end{theorem}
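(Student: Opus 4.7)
The hypothesis and conclusion are both local, so I fix a point $p \in S$ and work in a neighborhood of it. The strategy is to reduce to the classical base case of a smooth surface in $\R^3$ whose plane sections are all conics (hence a quadric), and then bootstrap from that.

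For the reduction, take any $3$-dimensional affine subspace $E \subset \R^n$ through $p$ transverse to $S$. Then $S \cap E$ is a smooth surface in $E \cong \R^3$, and every transverse plane section of $S \cap E$ is a transverse $2$-dimensional section of $S$, hence a conic. Assuming the base case, $S \cap E$ is a quadric in $E$; as this holds for every such $E$, every $3$-dimensional section of $S$ is a quadric. To pass from this to ``$S$ is a quadric''---a bootstrap in the spirit of the use of \cite[Lemma 12.1]{Gru} in the proof of Theorem \ref{thm:high}---I would fix $N := (n+1)(n+2)/2 - 1$ points of $S$ in general position (the dimension of the space of quadrics in $\R^n$), let $Q$ be the unique quadric through them, and show $S \subset Q$ by noting that for each test point $q \in S$ one can find a $3$-dimensional subspace $E \ni q$ through enough of the chosen points to force $S \cap E = Q \cap E$, whence $q \in Q$.

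The heart of the argument is the base case $n = 3$. Fix $p \in S$, arrange coordinates so that $T_p S$ is horizontal, and write $S$ near $p$ as the graph $x_3 = f(x_1, x_2)$ with $f(0) = 0$ and $\nabla f(0) = 0$; expand $f = \sum_{k \ge 2} h_k$ with $h_k$ homogeneous of degree $k$. For each direction $v \in \R^2$, the intersection of $S$ with the vertical $2$-plane through $v$ and the $x_3$-axis is the graph of $g_v(t) := f(tv)$, and the conic-section hypothesis yields coefficients $\alpha_v, \beta_v, \gamma_v, \delta_v$ (not all zero) with
\[
\alpha_v\, g_v(t)^2 + \beta_v\, t\, g_v(t) + \gamma_v\, t^2 + \delta_v\, g_v(t) = 0.
\]
Matching Taylor coefficients in $t$ successively expresses $\gamma_v, \beta_v, \alpha_v$ in terms of $h_2(v), h_3(v), h_4(v)$, and forces polynomial identities on the higher $h_k(v)$, the first being $h_2^2 h_5 = 3 h_2 h_3 h_4 - 2 h_3^3$. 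The aim is to show that these identities (and their analogs obtained by moving the base point along $S$) are equivalent to $f$ satisfying a global quadratic equation $\alpha f^2 + (L(x_1, x_2) + c) f + Q(x_1, x_2) = 0$ with $\alpha, c$ constants, $L$ a linear form and $Q$ a quadratic form---equivalently, $S$ is a quadric.

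The main obstacle is closing the base case: converting the infinite collection of polynomial identities on Taylor coefficients into a single quadratic equation, or equivalently realizing $\alpha_v, \beta_v, \gamma_v, \delta_v$ as restrictions to the line through $v$ of a constant, a linear form, a quadratic form, and a constant in $v$, respectively. An alternative I would also try is algebraic-geometric: take nine points of $S$ in general position, let $Q$ be the unique quadric through them, and for each $q \in S$ exhibit a plane $\pi \ni q$ meeting $S$ in a conic sharing five points with $\pi \cap Q$, whence the two conics coincide and $q \in Q$. The delicate step there is arranging the nine base points so that, for any prescribed $q$, a five-element subset becomes coplanar with $q$---a careful genericity argument.
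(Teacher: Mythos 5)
Your proposal does not close the proof: you acknowledge this yourself for the base case, and the two routes you sketch around it both run into counting obstructions. In the nine-point alternative, a plane in $\R^3$ through a prescribed point $q$ cannot contain five of nine base points chosen in general position (general position means no four of them are even coplanar), so the ``five shared points'' can never be produced this way. The same problem kills the bootstrap from $3$-dimensional sections to $\R^n$: a $3$-dimensional affine subspace through $q$ contains at most four of your $N$ general-position points, far fewer than the nine needed to pin down the quadric $Q$ on that slice. The Taylor-coefficient route is not wrong in principle, but you have not shown how the infinite family of identities such as $h_2^2h_5=3h_2h_3h_4-2h_3^3$ integrates to a single quadratic relation, and that is exactly the hard part.

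The paper's proof (communicated by Glutsyuk, after Berger) supplies the missing idea and works in all dimensions at once, with no reduction to $n=3$. Pick $x,y\in S$ with the line $xy$ transverse to $S$ at both points, and build the candidate quadric $Q$ not from point conditions but from local differential data: $Q$ is required to share the tangent hyperplanes of $S$ at $x$ and at $y$, and the second fundamental form of $S$ at $x$. Such a $Q$ exists explicitly: a projective transformation sends $T_yS$ to the hyperplane at infinity, makes $T_xS$ horizontal and the line $xy$ vertical, an orthogonal change diagonalizes the second fundamental form, and one takes the paraboloid $y=\sum a_ix_i^2$. Now for any $2$-plane $\pi$ through the line $xy$, the sections $C=S\cap\pi$ and $C'=Q\cap\pi$ are conics whose local intersection index is at least $2$ at $y$ (shared tangent line) and at least $3$ at $x$ (shared tangent line and curvature), totalling at least $5>4$, so $C=C'$ by B\'ezout. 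Since the planes through the line $xy$ sweep out all of space, $S=Q$. In short, the device you are missing is to replace ``five shared points'' by ``contact of total order five at two points,'' which is a condition you can actually arrange; it converts the rigidity of conics into the statement without any resummation of Taylor identities.
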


\begin{proof}
This proof was communicated to us by A. Glutsyuk; it is a simplified version of the argument of M. Berger \cite{Ber}, where a stronger statement is proved.

Let $x,y\in S$ be two points such that the line $xy$ is transverse to $S$ at $x$ and $y$. Let $Q$ be the quadric that shares the tangent hyperplanes with $S$ at points $x$ and $y$, and whose second quadratic form coincides with that of $S$ at point $x$. Below we will show that such a quadric exists.

We claim that $S=Q$. Indeed, consider a plane through the line $xy$. Its intersection with $S$ and $Q$ are conics, say, $C$ and $C'$. The local index of intersection of $C$ and $C'$ at $y$ is at least 2, and at $x$ it is at least 3. Hence the total index of intersection is at least 5, which implies that $C=C'$. This it true for all 2-planes containing $xy$, proving  the claim. 

It remains to construct the quadric $Q$. Applying a projective transformation, we may assume that $T_yS$ is the hyperplane at infinity, $T_xS$ is a ``horizontal" coordinate hyperplane, and the line $xy$ is the ``vertical" coordinate axis.
Applying an orthogonal transformation, we may assume that the second fundamental form is diagonal $\mathrm{diag}[a_1,\dots,a_n]$.  Then, in the Cartesian coordinates $(x_1,\dots,x_n,y)$, the hypersurface $S$ is a paraboloid given by the equation $y=\sum a_i x_i^2$.
\end{proof}

\bigskip

{\bf Acknowledgements}: We are grateful to A. Akopyan, M. Bialy,  A. Petrunin, and especially to A. Glutsyuk, for useful discussions. 
ST was supported by NSF grant DMS-2005444.

\end{document}